\newtheorem{theorem}{Theorem}
\theoremstyle{plain}
\newtheorem{definition}{Definition}
\newtheorem{lemma}{Lemma}
\newtheorem{problem}{Problem}
\newtheorem{remark}{Remark}
\numberwithin{equation}{section}
\title[Equisingular plane curve singularities]
{A short proof that equisingular plane curve singularities are topologically equivalent}
\author[S. Brzostowski]{Szymon Brzostowski}
\author[T. Krasi\'nski]{Tadeusz Krasi\'{n}ski}
\author[J. Walewska]{Justyna Walewska}
\address{Faculty of Mathematics and Computer Science\newline \indent University of \L\'od\'z\newline  \indent
 Banacha 22, 90-238 \L\'od\'z\newline\indent Poland
}
\email[Szymon Brzostowski]{brzosts@math.uni.lodz.pl}
\email[Tadeusz Krasi\'nski]{krasinsk@uni.lodz.pl}
\email[Justyna Walewska]{walewska@math.uni.lodz.pl}
\subjclass[2010]{Primary 32S05; Secondary 14H20}
\keywords{Plane curve singularity, equisingular singularities, topologically equivalent singularities, blow up}
\begin{document}

\begin{abstract}
We prove that if two plane curve singularities are equisingular, then they are
topologically equivalent. The method we will use is P.~Fortuny~Ayuso's who proved this result for irreducible plane curve singularities.

\end{abstract}
\maketitle

\section{Introduction}

Let $\Gamma,\widetilde{\Gamma}$ be two \textit{plane curve singularities}
(shortly \textit{singularities}) at $0\in\mathbb{C}^{2}.$ We treat a
singularity as the germ of an $1$-dimensional analytic set passing through $0$
or as a representative of such a germ. Among many possible equivalences between
$\Gamma$ and $\widetilde{\Gamma}$: topological, analytic, bilipschitz, etc.\ the most natural is,
in retrospect, the topological one. $\Gamma$ and $\widetilde{\Gamma}$ are
\textit{topologically equivalent} if and only if there exist neighbourhoods
$U_{1}$ and $U_{2}$ of $0\in\mathbb{C}^{2}$ and a homeomorphism $\Phi
:U_{2}\rightarrow U_{1}$ such that $\Gamma\cap U_{1}=\Phi(\widetilde{\Gamma
}\cap U_{2}).$ It is known that the equivalence classes of this relation posses
complete, discrete sets of invariants. Such are, for instance:

\begin{enumerate}[1. ]
	\item the Puiseux characteristic sequences of branches of $\Gamma$ together with
	intersection multiplicities between them,
	
	\item the sequences of multiplicities of branches occurring during the desingularization
	process of $\Gamma$ together with an appropriate relation,
	
	\item the dual weighted graph encoding the desingularization process,
	
	\item the Enriques diagrams,
	
	\item the semigroups of branches and intersection multiplicities between them,
\end{enumerate}

\noindent and many others (see \cite{BK}, \cite{Pfister}, \cite{Wall}). To
establish that each of these data sets is a complete set of topological
invariants of $\Gamma$ is a difficult task. The usual references here are the classical
papers of Brauner \cite{Brauner}, K\"{a}hler \cite{Kahler}, Burau \cite{Burau1}, \cite{Burau2},
but these are difficult to follow and full of the theory of knots (a new approach,
though in the same spirit, can be found in Wall \cite{Wall}, see also \cite{Kras1},
\cite{Kras2}, \cite{Kras3}). In turn, to establish that any two of these data sets
are ``equivalent'', i.e.\ determine one another, is relatively easier. Thus, it
is natural and widely accepted to define: two singularities $\Gamma, $
$\widetilde{\Gamma}$ are \textit{equisingular} if and only if they have the
same data sets of type 1 or 2 or 3 or 4 or 5.

Recently, in the case of branches (=irreducible singularities),  P. Fortuny Ayuso \cite{Ayuso} gave
a new and simple proof of the implication that equisingularity implies topological equivalence,
in which he completely eliminated knot theory. He used only desingularization process.

In the article we extend this result to arbitrary singularities (with many
branches) using the same idea as P.\ Fortuny Ayuso. For a proof of the inverse
implication (also without knot theory) for bilipschitz equivalency see the recent preprint by 
A.~Fernandes, J. E. Sampaio and J.\ P.\ Silva \cite{FernandesSS}. We also
recommend the paper by W.~D.~Neumann and A.~Pichon
\cite{Neumann}. Since Ayuso's method involves desingularization process which itself may be
described in many ways (see the ways 1, 2, 3, 4, 5), we opt for one of these -- the Enriques
diagrams language -- as the most convenient for our purposes.

In Section 2 we recall briefly the desingularization process, the Enriques
diagrams and their properties. Section 3 is devoted to the main result.

\section{Desingularization and the Enriques diagrams}

The basic construction in desingularization process is the blowing-up. Since
desingularization of plane curve singularities leads naturally to blowing-ups
of complex manifolds, we recall this notion right away for manifolds. One
can find the details in many sources \cite{BK}, \cite{Casas}, \cite{Pfister}, \cite{Loj}.

Let $M$ be a 2-dimensional complex manifold and $P\in M.$ The
\textit{blowing-up} of $M$ at $P$ is a 2-dimensional manifold $\widehat{M}$
and a holomorphic mapping $\pi:\widehat{M}\rightarrow M$ with properties:

\begin{enumerate}[1. ]
\item $E:=\pi^{-1}(P)$ is biholomorphic to the 1-dimensional projective space
$\mathbb{P}$ of lines in $\mathbb{C}^{2}$ passing through $0$ $(E$ is called
the \textit{exceptional divisor} of the blowing-up~$\pi),$

\item $\pi|_{\widehat{M}\setminus E}:$ $\widehat{M}\setminus E\rightarrow
M\setminus\{P\}$ is a biholomorphism,

\item for a neighbourhood $U$ of $P$ the mapping $\pi|_{\pi^{-1}(U)}$ is
biholomorphic to a local standard blowing-up of a neighbourhood of
$0\in\mathbb{C}^{2}$, where by the standard blowing-up we mean $\pi^{\operatorname{st}}
:B\rightarrow\mathbb{C}^{2},$ where $B=\{(z,l)\in\mathbb{C}^{2}\times
\mathbb{P}:z\in l\}$ and $\pi^{\operatorname{st}}(z,l):=z.$
\end{enumerate}

The blowing-up $M$ at $P$ always exists and is uniquely defined up to a
biholomorphism. The points in $E$ are called \textit{infinitely near to} $P.$
Since the blowing-up of $M$ at $P$ leads to a manifold $\widehat{M}$, we may repeat the process,
this time blowing-up $\widehat{M}$ at points of $\widehat{M},$ in particular at points in
$E.$ In this case the points in consecutive exceptional divisors are also
called \textit{infinitely near to} $P.$ Since the blowing-up $\pi:\widehat
{M}\rightarrow M$ is a proper mapping, the image $\pi(\Gamma)$ of an arbitrary
analytic subset $\Gamma\subset\widehat{M}$ is an analytic subset of $M.$

Let $\Gamma$ be a plane curve singularity at $P\in M.$ We define the
\textit{proper preimage of} $\Gamma$ as the closure $\overline{\pi^{-1}(\Gamma
\setminus\{P\})}$ and denote it by $\widehat{\Gamma}.$ The analytic set
$\widehat{\Gamma}$ is obtained by adding to the set $\pi^{-1}(\Gamma)\setminus
E$ its accumulation points on $E.$ By an analysis of an equation of
$\widehat{\Gamma}$ in local coordinates in $\widehat{M}$ it follows that the number
of such points is equal to the number of tangent lines to $\Gamma$ at
$P$; in particular, there are only finitely many of them.
All these points are said to \textit{lie on} or \textit{belong to} $\Gamma$. If $Q$ is such a point, then the germ of the proper preimage of $\Gamma$ to
which the point $Q$ belongs is denoted by $\Gamma^{Q}$. In particular, if
$\Gamma$ is an irreducible singularity, then this is just one point (as an irreducible
singularity has only one tangent line). We continue the process of blowing-ups
during desingularization of $\Gamma$ through blowing-ups at consecutive points
which are infinitely near to $P$ and belong to $\Gamma$, until we get a~manifold
$\widehat{\widehat{M}}$ and $\widehat{\pi}:\widehat{\widehat{M}}\rightarrow M$
such that:
\begin{enumerate}[1. ]
\item the proper preimage $\widehat{\widehat{\Gamma}}$ of $\Gamma$ by
$\widehat{\pi}$ is non-singular,

\item in each point of $\widehat{\widehat{\Gamma}}\cap E$ the germ $\widehat
{\widehat{\Gamma}}$ transversally intersects the exceptional divisor $E$ (it means $\widehat{\widehat{\Gamma}}$  and $E$ at such a point are nonsingular and their tangent lines are different).
\end{enumerate}
Two singularities $\Gamma$ and $\widetilde{\Gamma}$ at $0\in\mathbb{C}^{2}$
are equisingular if they  ``have the same desingularization process''. To describe
accurately what it means we have to pay attention to mutual positions of
consecutive proper preimages of the singularity with respect to ``newly pasted''
projective spaces. One of such descriptions is the Enriques' diagram
$E(\Gamma)$ of the resolution of a singularity $\Gamma$. It is a graph
(precisely a tree) with a distinguished root and two kinds of edges: straight
and curved. We outline its construction for an irreducible singularity; for a reducible 
singularity with many branches we construct the Enriques diagrams for each branch
separately and next we ``glue'' these diagrams by identifying vertices representing 
the same infinitely near points and, if neccessary, prolonging blowing-ups to separate branches.

Let $\Gamma$ be an irreducible singularity at $0\in\mathbb{C}^{2}$ and
$\pi:M\rightarrow(\mathbb{C}^{2},0)$ its resolution. The vertices of
$E(\Gamma)$ are all the points belonging to $\Gamma$ in this process of
desingularization (including the point $0\in\mathbb{C}^{2}$ and all points
infinitely near to $0$ that lie on $\Gamma).$ These are centers
of consecutive blowing-ups including also the last one in which the process is
finished. This last point is a maximal point with respect to the partial ordering in the set of points lying on $\Gamma$, induced by successive blowing-ups. The point $0\in\mathbb{C}^{2}$ is a root of $E(\Gamma).$ The edges
of $E(\Gamma)$ connect successive centers. So, for an irreducible singularity
$E(\Gamma)$ is a bamboo (in the language of graph theory). However, there
are two kinds of edges: straight and curved. They are drawn according to the
following rules.

Let $P$ and $Q$ be two successive points that belong to $\Gamma$, and $Q$ is infinitely near to $P$:

\medskip

\begin{enumerate}[1. ]
	\item if $\Gamma^P$ is not tangent to the exceptional divisor at $P$, then edge $PQ$ is curved and moreover
	it  has at $P$ the same tangent as the edge ending at $P$ (Figure \ref{rys:1}).

\begin{figure}[ht]
	\centering
	\includegraphics[width=0.7\textwidth]{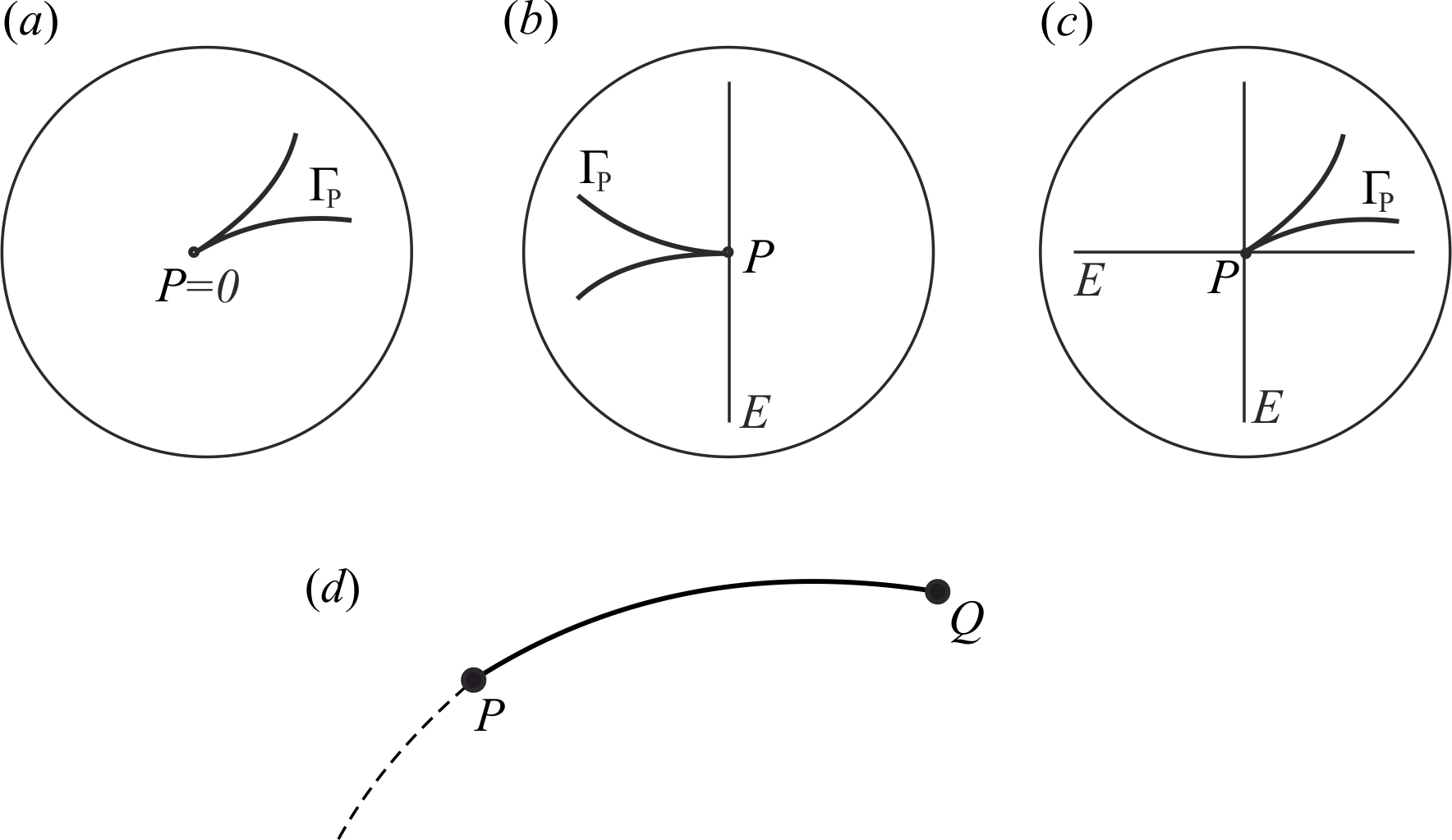}\\
	${}_{}$\\
	\caption{$\Gamma^P$ is not tangent to the exceptional divisor. Possible cases: ($a$) $P=0$ is the root of $E(\Gamma)$, ($b$) $P$ belongs to only one component of $E$, ($c$)~$P$ belongs to two components of $E$, ($d$) corresponding edge in $E(\Gamma)$.}\label{rys:1}
\end{figure}

	\item if $\Gamma^P$ is tangent to the exceptional divisor at $P$, then edge $PQ$ is straight but:

	\begin{enumerate}[a)]
		\item if $\Gamma^P$ is tangent to the ``last-pasted'' projective space, then this straight edge is perpendicular to the edge ending at $P$ (Figure \ref{rys:2}).

\begin{figure}[ht]
	\centering
	\includegraphics[width=0.5\textwidth]{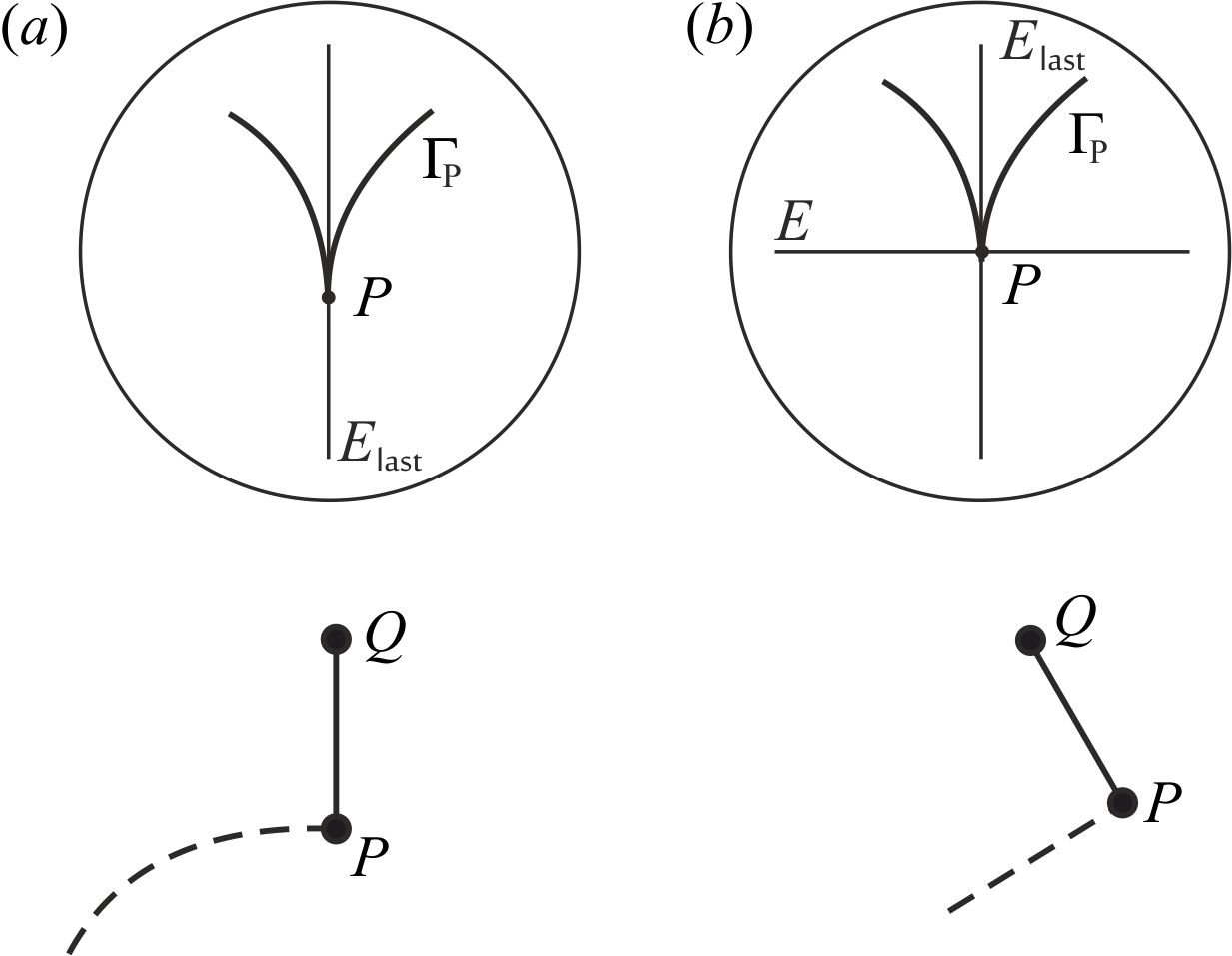}\\
	${}_{}$\\
	\caption{$\Gamma^P$ is tangent to the last-pasted component of $\Gamma$. Possible cases: ($a$)~$P$ belongs to only one component of $E$, ($b$) $P$ belongs to two components of $E$.}\label{rys:2}
\end{figure}

		\item if $\Gamma^P$ is tangent to the ``earlier-pasted'' projective space, then this straight edge is an extension of the previous one, which is also necessarily straight (Figure \ref{rys:3}).
		
		\bigskip
		
		\begin{figure}[ht]
			\centering
			\includegraphics[width=0.2\textwidth]{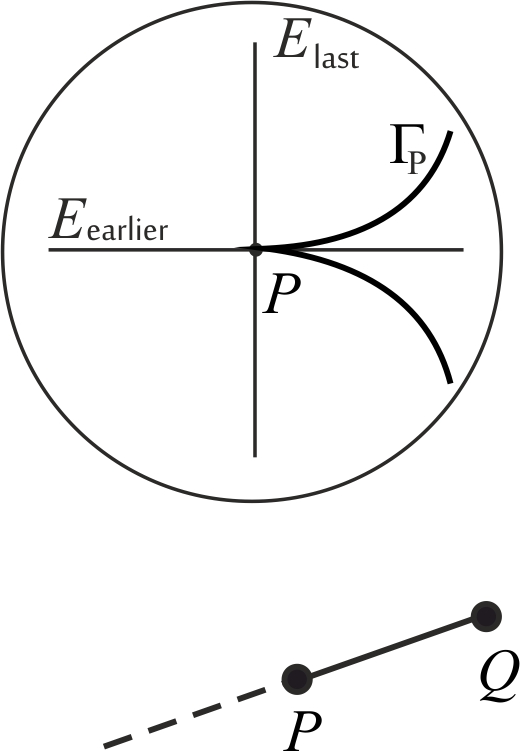}\\
			${}_{}$\\
			\caption{$\Gamma^P$ is tangent to the ``earlier-pasted'' component of $E$.}\label{rys:3}
		\end{figure}	
	\end{enumerate}
\end{enumerate}

\bigskip

The above discussion describes all possible cases that can occur, and thus yields the construction of $E(\Gamma)$.

\begin{remark}
	Notice that, by the very construction of $E(\Gamma)$, both its first and its last edge are always curved.
\end{remark}

\noindent\textbf{Examples} \textbf{ 1. } Let $\Gamma=\{(x,y): x^2-y^3=0\}$. Then $E(\Gamma)$ is as in Figure \ref{rys:4}($a$).

\vspace{0.25cm}

\noindent\textbf{ 2. } Let $\Gamma=\{(x,y): x^2-y^5=0\}$. Then $E(\Gamma)$ is as in Figure \ref{rys:4}($b$).

\vspace{0.25cm}

\noindent\textbf{ 3. } Let $\Gamma=\{(x,y): x^3-y^5=0\}$. Then $E(\Gamma)$ is as in Figure \ref{rys:4}($c$).

\medskip
\begin{figure}[ht]
	\centering
	\includegraphics[width=0.8\textwidth]{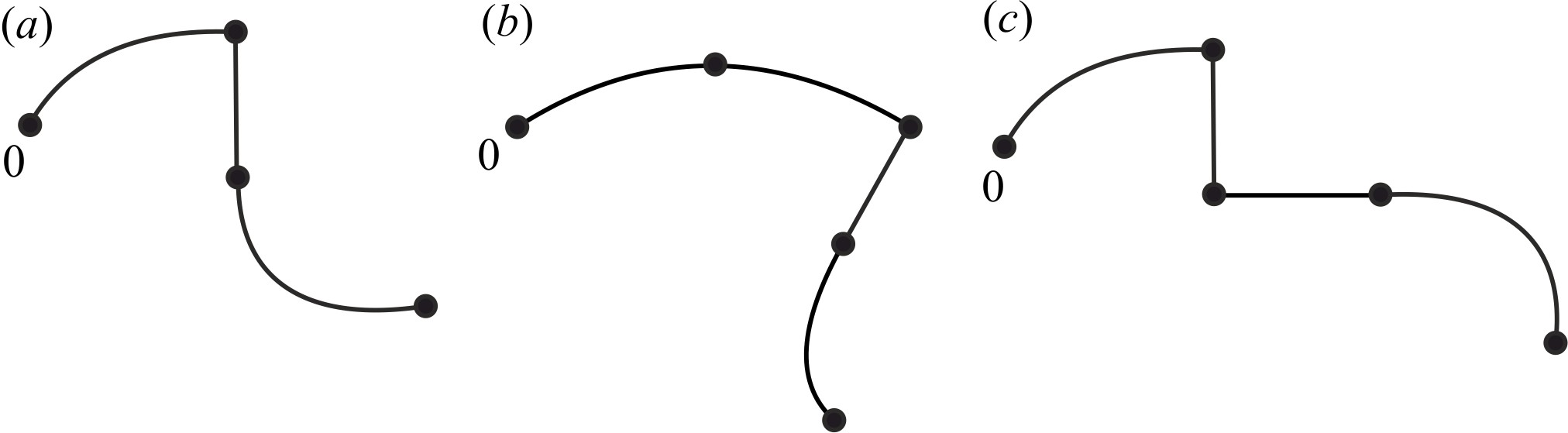}\\
	${}_{}$\\

	\caption{The Enriques diagrams of singularities in Example $1.$}\label{rys:4}
\end{figure}
\medskip

As we stated above, if $\Gamma$ is a reducible singularity with $k$ branches $\Gamma_1,\ldots,\Gamma_k$, then $E(\Gamma)$ is formed in the following way: first we construct $E(\Gamma_1),\ldots,E(\Gamma_k)$ and then we identify all their vertices representing one and the same infinitely near point (in particular, the tree root $0$ is a common point of all $E(\Gamma_i), i=1,\ldots,k$). If the Enriques diagrams of two (or more) branches end at the same infinitely near point, we prolong the process of blowing-ups to separate them. These branches are already non-singular and transversal to the exceptional divisor; so we add only curved edges. It is ilustrated by the following example.

\noindent\textbf{Example} \textbf{ 4. } 
	Let $\Gamma_1=\{(x,y):x^2-y^3=0\}$, $\Gamma_2=\{(x,y):x^2-y^3-y^4=0\}$, $\Gamma=\Gamma_1\cup\Gamma_2=\{(x,y):(x^2-y^3)(x^2-y^3-y^4)=0\}$. Their Enriques diagrams are drawn in Figure \ref{rys:5}.

\vglue.4cm
		
	\medskip

	\begin{figure}[ht]
		\centering
		\includegraphics[width=0.75\textwidth]{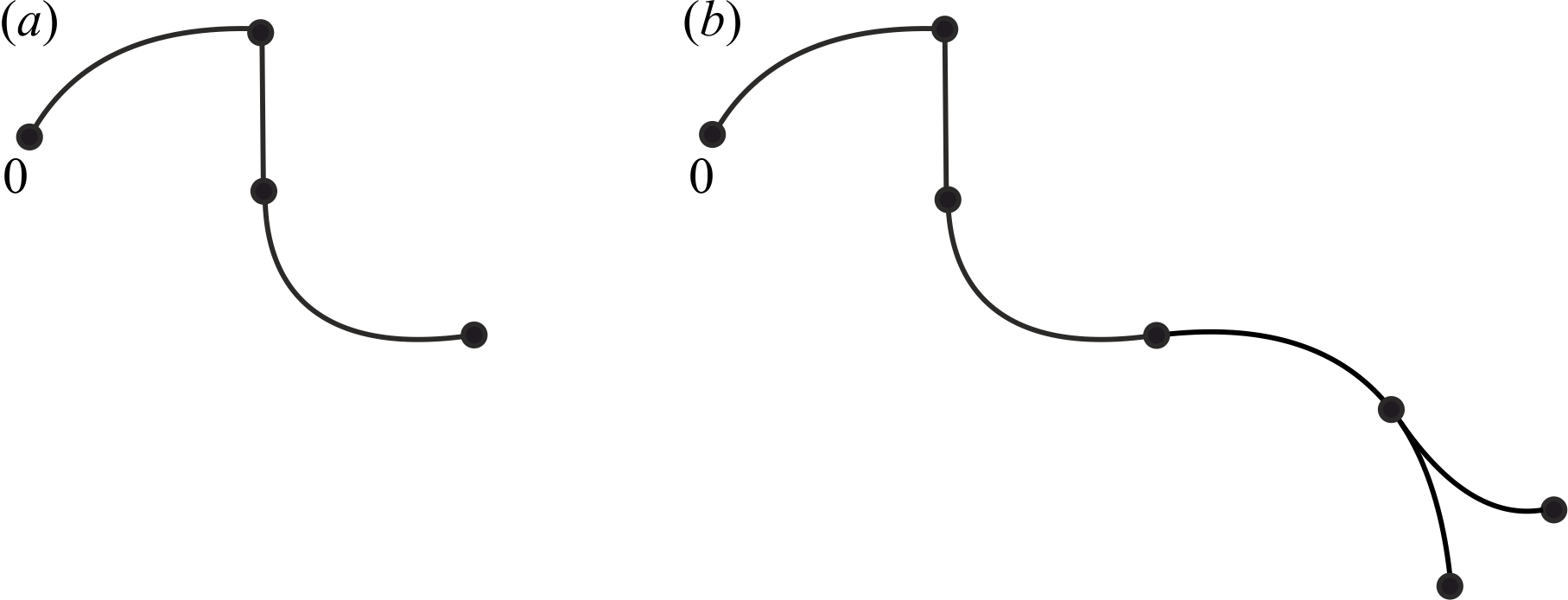}\\
		${}_{}$\\
	
		\caption{($a$) The Enriques diagram of both $\Gamma_1$ and $\Gamma_2$, ($b$) the Enriques diagram of $\Gamma$.}\label{rys:5}
	\end{figure}

It is interesting to observe that $E(\Gamma)$ does not have to be weighted. All necessary data needed to recognize the equisingularity class of $\Gamma$ can be read off from $E(\Gamma)$. In particular, there is a formula for the multiplicity $\mu_P(\Gamma)$ of successive proper preimages of $\Gamma$ (see \cite{Casas}, Theorem 3.5.3) at vertices of $E(\Gamma)$.
Moreover, the intersection multiplicity $\operatorname{i}(\Gamma, \widetilde{\Gamma})$ of two singularities $\Gamma$ and $\widetilde{\Gamma}$ can also be read off from $E(\Gamma)$ and $E(\widetilde{\Gamma})$. This is the famous Noether formula (see \cite{Casas}, Theorem 3.3.1).

\begin{theorem}[Noether's formula]
	If $\Gamma$, $\widetilde{\Gamma}$ are two singularities at $0\in\mathbb{C}^2$, then
	
	$$\operatorname{i}(\Gamma, \widetilde{\Gamma})=\sum_{P\in V(E(\Gamma\cup\widetilde{\Gamma}))}\mu_P(\Gamma^{P})\cdot\mu_P(\widetilde{\Gamma}^{P}),$$
where by $V(E(\Gamma))$ we denote the set of vertices of $E(\Gamma)$.
	
	\end{theorem}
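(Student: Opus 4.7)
The plan is to prove the formula by induction on the joint desingularization of $\Gamma\cup\widetilde{\Gamma}$, powered by the classical \emph{blow-up formula} for local intersection multiplicities: if $\pi\colon\widehat{M}\to M$ is the blowing-up at a point $P$ through which both $\Gamma,\widetilde{\Gamma}$ pass, then
$$\operatorname{i}_P(\Gamma,\widetilde{\Gamma})=\mu_P(\Gamma)\cdot\mu_P(\widetilde{\Gamma})+\sum_{Q\in E\cap\widehat{\Gamma}\cap\widehat{\widetilde{\Gamma}}}\operatorname{i}_Q\bigl(\widehat{\Gamma},\widehat{\widetilde{\Gamma}}\bigr),$$
where $E=\pi^{-1}(P)$.

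The first step is to establish this blow-up formula. Writing local equations $f,g$ of $\Gamma,\widetilde{\Gamma}$ near $P$ with initial forms of degrees $m=\mu_P(\Gamma)$ and $\widetilde m=\mu_P(\widetilde{\Gamma})$, in each of the two standard charts of $\widehat{M}$ the pull-backs factor as $\pi^{\ast}f=e^{m}\hat f$ and $\pi^{\ast}g=e^{\widetilde m}\hat g$, where $e$ is a local equation of $E$ and $\hat f,\hat g$ cut out $\widehat{\Gamma},\widehat{\widetilde{\Gamma}}$. Using the standard identity $\operatorname{i}_P(f,g)=\sum_{Q\in E}\dim_{\mathbb{C}}\mathcal{O}_{\widehat{M},Q}/(\pi^{\ast}f,\pi^{\ast}g)$ coming from $\pi_{\ast}\mathcal{O}_{\widehat{M}}=\mathcal{O}_{M}$, and then splitting off the $e^{m},e^{\widetilde m}$ factors, the contributions from $E$ itself sum to the diagonal term $m\widetilde m$, while the residual pairs $(\hat f,\hat g)$ contribute exactly $\operatorname{i}_Q(\widehat{\Gamma},\widehat{\widetilde{\Gamma}})$ at each point $Q$ common to both proper preimages.

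With the blow-up formula in hand, I would iterate it along the joint desingularization tree. At the root $0$ it produces $\mu_0(\Gamma)\mu_0(\widetilde{\Gamma})$ plus intersection multiplicities at the level-$1$ vertices of $E(\Gamma\cup\widetilde{\Gamma})$; applying it at each such vertex, and so on recursively, one unfolds the entire tree. The recursion terminates after finitely many steps because the joint desingularization by construction separates all branches of $\Gamma\cup\widetilde{\Gamma}$, so eventually the proper preimages of $\Gamma$ and $\widetilde{\Gamma}$ share no point. Collecting the produced diagonal terms along the tree yields precisely the sum over the vertices common to both $E(\Gamma)$ and $E(\widetilde{\Gamma})$; vertices of $E(\Gamma\cup\widetilde{\Gamma})$ not crossed by $\Gamma$ (resp.\ $\widetilde{\Gamma}$) have $\mu_P(\Gamma^P)=0$ (resp.\ $\mu_P(\widetilde{\Gamma}^P)=0$) and contribute zero, so the sum may harmlessly be extended to all of $V(E(\Gamma\cup\widetilde{\Gamma}))$ as in the statement.

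The main obstacle is the blow-up formula itself: pinning down the multiplicities with which $E$ appears in the pulled-back equations in each chart, and verifying that the residual intersection numbers add up correctly across the two standard charts to $\sum_{Q}\operatorname{i}_Q(\widehat{\Gamma},\widehat{\widetilde{\Gamma}})$ without double-counting points on the chart overlaps. Once this local identity is secured, the rest of the argument is a mechanical unfolding along the resolution tree.
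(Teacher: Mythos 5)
The paper itself offers no proof of Noether's formula --- it is quoted from Casas-Alvero \cite{Casas}, Theorem 3.3.1 --- and the proof given there is in outline the one you propose: establish the one-blow-up identity $\operatorname{i}_P(\Gamma,\widetilde{\Gamma})=\mu_P(\Gamma)\,\mu_P(\widetilde{\Gamma})+\sum_{Q}\operatorname{i}_Q(\widehat{\Gamma},\widehat{\widetilde{\Gamma}})$ and iterate it along the joint resolution until the proper transforms no longer meet. Your global induction, the termination argument (the resolution separates all branches), and the observation that vertices missed by one of the two curves contribute zero are all correct.

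The genuine gap is in your justification of the blow-up formula itself. The identity $\operatorname{i}_P(f,g)=\sum_{Q\in E}\dim_{\mathbb{C}}\mathcal{O}_{\widehat{M},Q}/(\pi^{\ast}f,\pi^{\ast}g)$ cannot hold as stated: the total transforms $\pi^{\ast}f$ and $\pi^{\ast}g$ both vanish identically on $E$, so their common zero locus contains the whole exceptional curve and each local algebra $\mathcal{O}_{\widehat{M},Q}/(\pi^{\ast}f,\pi^{\ast}g)$ with $Q\in E$ is infinite-dimensional; there is no pointwise ``contribution of $E$'' to split off. Two standard repairs are available. (i) Work with divisors: the total transform is $\widehat{\Gamma}+mE$, and the projection formula together with $E\cdot E=-1$ and $E\cdot\widehat{\Gamma}=m$, $E\cdot\widehat{\widetilde{\Gamma}}=\widetilde{m}$ gives $(\widehat{\Gamma}+mE)\cdot(\widehat{\widetilde{\Gamma}}+\widetilde{m}E)=\widehat{\Gamma}\cdot\widehat{\widetilde{\Gamma}}+m\widetilde{m}$; this, however, needs a compact model or a local excess-intersection argument. (ii) The elementary route via parametrizations: if $t\mapsto\varphi(t)$ parametrizes a branch of $\Gamma$ of multiplicity $n$ at $P$ and $\widehat{\varphi}$ is its lift, then $\pi^{\ast}g=e^{\widetilde{m}}\widehat{g}$ yields $\operatorname{ord}_t(g\circ\varphi)=n\,\widetilde{m}+\operatorname{ord}_t(\widehat{g}\circ\widehat{\varphi})$, and summing over the branches of $f$ (whose multiplicities add up to $m$) gives the blow-up formula with no chart bookkeeping at all. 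With either fix in place, the rest of your induction goes through.
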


After these preparations, we can finally give a precise definition of equisingularity.
\begin{definition}
	Two plane curve singularities $\Gamma$, $\widetilde{\Gamma}$ are equisingular if their Enriques diagrams $E(\Gamma)$ and $E(\widetilde{\Gamma})$ are isomorphic (it means there exists a graph isomorphism $E(\Gamma)$ with $E(\widetilde{\Gamma})$ which preserves the shapes and angles between edges).
\end{definition}
If $\Gamma$ and $\widetilde{\Gamma}$ are reducible, then the equisingularity of $\Gamma$ to $\widetilde{\Gamma}$ can be equivalently expressed in the terms of their branches and intersection multiplicities (see \cite{Casas}, Theorem 3.8.6).

\begin{theorem}
	If $\Gamma$ has $k$ branches $\gamma_1,\ldots,\gamma_k$ and $\widetilde{\Gamma}$ has $\widetilde{k}$ branches $\widetilde{\gamma}_1, \ldots,\widetilde{\gamma}_{ \tilde{k}}$ then $\Gamma$ and $\widetilde{\Gamma}$ are equisingular if and only if $k=\widetilde{k}$ and, after renumbering branches,
	
	\begin{enumerate}[1. ]
		\item $E(\gamma_i)\cong E(\widetilde{\gamma_i})$, $i=1,\ldots,k$,
		\item $\operatorname{i}(\gamma_i,\gamma_j)=\operatorname{i}(\widetilde{\gamma_i},\widetilde{\gamma_j})$, $i,j=1,\ldots,k$, $i\not=j$.
	\end{enumerate}
\end{theorem}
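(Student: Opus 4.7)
The plan is to treat the two implications separately. For $(\Rightarrow)$, let $\Phi : E(\Gamma) \to E(\widetilde{\Gamma})$ be an isomorphism preserving shapes and angles. The branches of $\Gamma$ are in bijection with the maximal root-to-leaf paths of $E(\Gamma)$, so $\Phi$ induces a bijection on branches, giving $k = \tilde{k}$ and, after renumbering, isomorphisms $E(\gamma_i) \cong E(\widetilde{\gamma_i})$ obtained by restricting $\Phi$ to each branch path (and, when necessary, trimming the trailing curved tail that was appended in the gluing construction to separate branches with identical standalone diagrams; this trimming is canonical since it consists of removing the string of terminal curved edges with branch-multiplicity $1$). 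The equality $i(\gamma_i, \gamma_j) = i(\widetilde{\gamma_i}, \widetilde{\gamma_j})$ then follows directly from Noether's formula: $\Phi$ identifies the common vertices of $E(\gamma_i)$ and $E(\gamma_j)$ inside $E(\Gamma)$ with the analogous common vertices inside $E(\widetilde{\Gamma})$, and the multiplicities $\mu_P$ are intrinsic to the local tree structure, hence preserved.

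For $(\Leftarrow)$, choose isomorphisms $\phi_i : E(\gamma_i) \to E(\widetilde{\gamma_i})$ and aim to paste them into a global isomorphism $\Phi$. The combined diagram $E(\Gamma)$ is uniquely reconstructed from the individual $E(\gamma_i)$'s once we know the gluing integers $m_{ij}$, where $m_{ij}$ denotes the number of infinitely near points shared by $\gamma_i$ and $\gamma_j$. By Noether's formula
\[
i(\gamma_i, \gamma_j) = \sum_{k=0}^{m_{ij}-1} \mu_{P_k}(\gamma_i^{P_k})\,\mu_{P_k}(\gamma_j^{P_k}),
\]
and since the multiplicity sequence of an irreducible branch is intrinsic to its standalone Enriques diagram (prolongation vertices contributing $1\cdot 1=1$), the right-hand side is strictly increasing in $m_{ij}$. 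Hence $m_{ij}$ is uniquely recovered from $E(\gamma_i)$, $E(\gamma_j)$ and $i(\gamma_i,\gamma_j)$. Hypothesis (2) therefore forces $m_{ij} = \tilde{m}_{ij}$, so $\phi_i$ and $\phi_j$ agree on the shared initial segments and can be pasted together. The shape/angle information at each vertex of the glued diagram comes from the individual $\phi_i$'s, and it matches because the shape of each child edge is a local property of the branch it belongs to.

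The main obstacle I anticipate is verifying \emph{global} consistency of the pasting when three or more branches meet at a common infinitely near point: one must show that $\phi_i$, $\phi_j$, $\phi_\ell$ all send that vertex to the same vertex of $E(\widetilde{\Gamma})$. This reduces to an ultrametric-type compatibility $m_{j\ell} \ge \min(m_{ij}, m_{i\ell})$ together with its permutations, encoding the fact that the sharing relation really produces a tree rather than a more general graph; this should follow from the uniqueness argument above applied to triples of branches. A secondary delicate point is the handling of curved tails appended when several individual diagrams terminate at the same infinitely near point: their common length is again controlled by the residual intersection multiplicities between the coincident branches, so the tails are matched by the same Noether-type argument carried out past the individual diagrams.
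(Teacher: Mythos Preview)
The paper does not give its own proof of this theorem: it is stated with a reference to Casas-Alvero (\cite{Casas}, Theorem~3.8.6) and used as a known fact. So there is nothing to compare your argument against within the paper itself.

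That said, your sketch is along the right lines and is essentially how one proves the result. A couple of remarks on the points you yourself flag. For the $(\Leftarrow)$ direction, note that each $E(\gamma_i)$ is a bamboo with a distinguished root, so the isomorphism $\phi_i$ is unique; there is no freedom in choosing it, and hence no genuine cocycle-type obstruction to gluing. The ``ultrametric'' inequality $m_{j\ell}\ge\min(m_{ij},m_{i\ell})$ you worry about is automatic on both sides (it expresses that the infinitely near points form a tree), so once you have shown $m_{ij}=\tilde m_{ij}$ for all pairs the combinatorial tree structures coincide. Your Noether argument for recovering $m_{ij}$ from $i(\gamma_i,\gamma_j)$ is correct: the multiplicity sequence of a branch is read off from $E(\gamma_i)$ (this is the content of \cite{Casas}, Theorem~3.5.3, which the paper also cites), each term in the Noether sum is at least $1$, so the partial sums are strictly increasing and $m_{ij}$ is determined. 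The prolongation tails are handled the same way, with all multiplicities equal to $1$. Finally, the shape/angle data at a branching vertex is indeed local to each branch: if two branches share the same next infinitely near point they have the same tangent at $P$, hence the edge shape computed from either agrees.
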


\section{The main result}

We prove the following theorem

\begin{theorem}
If $\Gamma,\widetilde{\Gamma}$ are two equisingular \textit{plane curve
singularities}, then $\Gamma$ and $\widetilde{\Gamma}$ are topologically equivalent.
\end{theorem}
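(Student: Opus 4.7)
The natural strategy is induction on the number $N$ of blow-ups needed to desingularize $\Gamma$; by equisingularity this equals the corresponding number for $\widetilde{\Gamma}$. To make the induction go through in the presence of the exceptional divisors produced during the resolution, I would strengthen the statement: for any pair of equisingular configurations $(\Gamma,D)$, $(\widetilde{\Gamma},\widetilde{D})$, where $D$ and $\widetilde{D}$ are germs of at most two smooth curves meeting normally at the basepoint (representing pre-existing exceptional components), there exists a homeomorphism of neighborhoods taking $\Gamma\cup D$ onto $\widetilde{\Gamma}\cup\widetilde{D}$ and $D$ onto $\widetilde{D}$.

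The base case $N=0$ reduces to matching finitely many smooth branches with prescribed distinct tangents and prescribed intersections with $D$; this is straightforward in local coordinates adapted to $D$. For the inductive step, let $\pi,\widetilde{\pi}$ be the blow-ups at the basepoints, with exceptional divisors $E,\widetilde{E}\cong\mathbb{P}^{1}$. The proper transforms $\widehat{\Gamma}$, $\widehat{\widetilde{\Gamma}}$ meet $E$, $\widetilde{E}$ at finite sets $\{P_{i}\}$, $\{\widetilde{P}_{i}\}$ put in bijective correspondence by the Enriques isomorphism, and the shape/angle data of that isomorphism tells us exactly which of the $P_{i}$ coincide (branches sharing an additional tangent direction) and which lie on the strict transform of the previous $D$ (encoded by straight edges). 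At each such point the Enriques sub-diagrams of $\widehat{\Gamma}^{P_{i}}$ and $\widehat{\widetilde{\Gamma}}^{\widetilde{P}_{i}}$ are isomorphic and require one fewer blow-up; the inductive hypothesis, applied with $E$ and the strict transform of $D$ playing the role of the new pre-existing divisor, produces local homeomorphisms $\Phi_{i}$ sending neighborhoods of $\widetilde{P}_{i}$ to neighborhoods of $P_{i}$ and carrying the germs, the exceptional components, and their intersections correctly.

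The key technical step is to splice these $\Phi_{i}$ into a single homeomorphism $\Phi$ on a neighborhood of $\widetilde{E}$ in $\widehat{\widetilde{M}}$, sending $\widetilde{E}$ onto $E$ and the strict transform of $\widetilde{D}$ onto that of $D$. One first uses the flexibility of orientation-preserving homeomorphisms of $S^{2}\cong\mathbb{P}^{1}$ to fix a map $\widetilde{E}\to E$ matching the finite configurations of marked points, then interpolates on a tubular neighborhood of $\widetilde{E}$ between this global map and the local $\Phi_{i}$ near the $\widetilde{P}_{i}$ via a partition of unity supported outside smaller neighborhoods of the marked points. Once $\Phi$ has been built, the descent is automatic: on a sufficiently small neighborhood of $E$ the blow-up $\pi$ is a quotient map collapsing $E$ to the basepoint and a homeomorphism off $E$ (similarly for $\widetilde{\pi}$), so $\Phi$ factors through $\widetilde{\pi}$ and $\pi$ to yield the required homeomorphism of neighborhoods below.

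I expect the main obstacle to be this gluing step. One has to reconcile the finitely many local homeomorphisms $\Phi_{i}$ with a single global map $\widetilde{E}\to E$ while simultaneously preserving the images of the pre-existing exceptional components and the relative tangency structure encoded by the Enriques diagram. The angle and shape information carried by the diagram is precisely what makes the reconciliation possible: case by case, corresponding to the possibilities in Figures \ref{rys:1}--\ref{rys:3} of Section 2 (free points, satellites of one component, satellites of two, and the two variants of straight edge), one verifies that the local models produced by the inductive hypothesis are compatible enough for the interpolation to succeed. This case analysis, together with the bookkeeping required to show that the resulting $\Phi$ respects the entire chain of previously created exceptional components throughout the induction, is where essentially all of the work lies.
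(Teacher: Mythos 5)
Your strategy --- induction on the depth of the resolution, with a strengthened statement carrying along the pre-existing exceptional components, followed by a descent through the blow-up --- is a legitimate classical route (it is essentially the plumbing construction), but as written it has a genuine gap exactly where you say ``essentially all of the work lies'': the splicing of the local homeomorphisms $\Phi_i$ with a global map $\widetilde{E}\to E$ is asserted, not proved, and the mechanism you propose does not work as stated. Homeomorphisms cannot be interpolated by a partition of unity: a convex combination of two homeomorphisms of a region of real dimension $4$ (a tubular neighbourhood of $\widetilde{E}$) need not be injective. What is actually required is an isotopy, on the annular overlap between the chart near $\widetilde{P}_i$ and the complement of the marked points, between the boundary behaviour of $\Phi_i$ and the chosen global map of disk bundles over $\mathbb{P}^1$, together with the isotopy extension theorem --- and one must verify this compatibility while preserving $\widetilde{E}$, the strict transform of $\widetilde{D}$, and the branches simultaneously. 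That verification, for each of the configurations in Figures \ref{rys:1}--\ref{rys:3}, is the whole theorem; deferring it to ``one verifies that the local models are compatible enough'' leaves the proof incomplete.

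The paper takes a different decomposition precisely to avoid this gluing problem. Instead of inducting on the number of blow-ups and descending a homeomorphism through the entire tower, it inducts on the number of infinitely near points at which the resolutions of $\Gamma$ and $\widetilde{\Gamma}$ (run through the \emph{same} sequence of blow-ups) disagree. At each step it modifies $\widetilde{\Gamma}$ by a single homeomorphism of $\mathbb{C}^2$ obtained by projecting (Lemma 2) a homeomorphism supported in an arbitrarily small ball around \emph{one} infinitely near point $P$ --- either Ayuso's vector-field lemma (Lemma \ref{Ayuso}) in the base case, or the cone lemma (Lemma \ref{l4}) moving one tangent direction onto another in the inductive step. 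Because each local homeomorphism is the identity outside a small ball, gluing is trivial (extend by the identity), and no map of the whole exceptional $\mathbb{P}^1$ ever has to be matched against local data. Equisingularity of the deformed curve $\widetilde{\Gamma}'$ is then checked via the Enriques diagrams and Noether's formula. If you want to salvage your approach you would need to supply the isotopy/extension arguments in real dimension $4$; alternatively, reorganizing the induction as the paper does dissolves the difficulty entirely.
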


 First we need several lemmas.

\begin{lemma}
If $\pi:B\rightarrow\mathbb{C}^{2}$ is the standard blowing-up of
$\mathbb{C}^{2}$ at $0$ and $\widetilde{\Phi}:B\rightarrow B$ is a
homeomorphism which keeps the exceptional divisor $E=\pi^{-1}(0)$ invariant
(i.e. $\widetilde{\Phi}(E)=E)$, then the mapping $\Phi:\mathbb{C}%
^{2}\rightarrow\mathbb{C}^{2}$ defined by%
\smallskip
\[
\Phi(x,y):=\left\{
\begin{tabular}
[c]{lll}%
$\pi\circ\widetilde{\Phi}\circ\pi^{-1}(x,y)$ & if & $(x,y)\neq(0,0)$\\
$(0,0)$ & if & $(x,y)=(0,0)$%
\end{tabular}
\right.
\]

is a homeomorphism of $\mathbb{C}^{2}$. We will call $\Phi$ the projection of
$\widetilde{\Phi}.$
\end{lemma}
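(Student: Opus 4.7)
The plan is to reduce the problem to a single issue---continuity at the origin---and then to exploit the properness of the blowing-up $\pi$ to resolve it. First I would note that $\pi$ restricts to a biholomorphism $B\setminus E\to\mathbb{C}^{2}\setminus\{0\}$, and that $\widetilde{\Phi}(E)=E$ together with bijectivity forces $\widetilde{\Phi}(B\setminus E)=B\setminus E$. Hence on $\mathbb{C}^{2}\setminus\{0\}$ the map $\Phi=\pi\circ\widetilde{\Phi}\circ\pi^{-1}$ is a composition of homeomorphisms, and combined with $\Phi(0,0)=(0,0)$ it is a bijection of $\mathbb{C}^{2}$ that restricts to a homeomorphism away from the origin. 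The same remarks applied to $\widetilde{\Phi}^{-1}$ identify $\Phi^{-1}$ with the projection of $\widetilde{\Phi}^{-1}$, so it suffices to show that every such projection is continuous at $(0,0)$.

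For the continuity at $(0,0)$, the key tool is that $\pi:B\to\mathbb{C}^{2}$ is proper---since $B\subset\mathbb{C}^{2}\times\mathbb{P}$ is closed and $\mathbb{P}$ is compact---and hence a closed map. Given an open neighbourhood $V$ of $(0,0)$ in $\mathbb{C}^{2}$, its preimage $\pi^{-1}(V)$ is an open set in $B$ containing the whole exceptional divisor $E$ (because $\pi(E)=\{0\}\subset V$). Pulling this back through $\widetilde{\Phi}$, the set $W:=\widetilde{\Phi}^{-1}(\pi^{-1}(V))$ is still an open neighbourhood of $E$, because $\widetilde{\Phi}^{-1}(E)=E$.

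The crucial geometric step is then to produce an open neighbourhood $U$ of $(0,0)$ with $\pi^{-1}(U)\subseteq W$. Here I would use properness in the following form: the complement $B\setminus W$ is closed in $B$ and disjoint from $E$, so $\pi(B\setminus W)$ is closed in $\mathbb{C}^{2}$ and does not contain $(0,0)$; taking $U:=\mathbb{C}^{2}\setminus\pi(B\setminus W)$ gives the desired neighbourhood, since any $p\in\pi^{-1}(U)$ satisfies $\pi(p)\notin\pi(B\setminus W)$ and therefore $p\in W$. Unwinding the definition now yields $\Phi(U)\subseteq V$, which is continuity of $\Phi$ at the origin, and the identical argument applied to $\widetilde{\Phi}^{-1}$ gives continuity of $\Phi^{-1}$ there. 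The main---and really only---obstacle is this extraction of $U$ from $W$; once one views properness of $\pi$ as the assertion that every open neighbourhood of $E$ in $B$ contains the $\pi$-preimage of some open neighbourhood of $(0,0)$, the lemma becomes essentially automatic.
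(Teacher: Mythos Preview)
Your proof is correct and follows essentially the same approach as the paper: the paper's proof is a one-line remark that $\Phi$ is a bijection and that $\pi$ is closed (indeed proper), and you have simply unpacked those two ingredients in detail, using properness to extract a neighbourhood $U$ of the origin with $\pi^{-1}(U)\subseteq W$.
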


\begin{proof}
The Lemma is obvious as $\Phi$ is a bijection and $\pi$ is a closed mapping
($\pi$ is even a proper mapping).
\end{proof}

Of course Lemma 1 can be extended to any sequence of blowing-ups.

\begin{lemma}
If $\pi:\widetilde{B}\rightarrow B,$ $B,\widetilde{B}$ -- complex 2-manifolds,
is a composition of blowing-ups and $\widetilde{\Phi}%
:\widetilde{B}\rightarrow\widetilde{B}$ is a homeomorphism which keeps the
exceptional divisor $E$ invariant, then the projection $\Phi$ of $\widetilde
{\Phi}$ is a homeomorphism of $B.$
\end{lemma}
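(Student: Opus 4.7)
The plan is to adapt the proof of Lemma~1, replacing the single exceptional curve by the total exceptional configuration $E$ of the tower $\pi$ and using properness of compositions of blowings-up. Since $\pi$ is a composition of blowings-up, each of which is proper, $\pi$ itself is proper; hence the image $S:=\pi(E)$ is a finite subset of $B$ (the projections in $B$ of the successively chosen centers), and $\pi$ restricts to a biholomorphism $\widetilde{B}\setminus E\to B\setminus S$. Moreover, because $\pi$ is continuous and $S$ is finite (hence discrete), each connected component of $E$ is contracted to a single point of $S$, yielding a natural bijection between the components of $E$ and the elements of $S$.

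Since $\widetilde{\Phi}$ is a homeomorphism with $\widetilde{\Phi}(E)=E$, it must permute the connected components of $E$ and therefore induces a permutation $\sigma$ of $S$. I would then define the projection $\Phi:B\to B$ by
\[
\Phi(x):=\begin{cases}\pi\circ\widetilde{\Phi}\circ\pi^{-1}(x)&\text{if }x\in B\setminus S,\\ \sigma(x)&\text{if }x\in S,\end{cases}
\]
which is unambiguous and bijective, its inverse being the analogous projection of $\widetilde{\Phi}^{-1}$. Continuity on $B\setminus S$ is immediate. At a point $P\in S$, if $x_n\to P$ in $B$, then properness of $\pi$ confines the preimages $\pi^{-1}(x_n)$ to a compact set, and any accumulation point must lie on the component $\pi^{-1}(P)\subset E$; continuity of $\widetilde{\Phi}$ then forces the images to accumulate on the component $\pi^{-1}(\sigma(P))$, and applying $\pi$ one last time yields $\Phi(x_n)\to\sigma(P)$. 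The same argument applied to $\widetilde{\Phi}^{-1}$ shows $\Phi^{-1}$ is continuous as well.

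The main obstacle, if one were to attempt a proof instead by induction on the number of constituent blowings-up and repeated application of Lemma~1, is that $\widetilde{\Phi}$ need not preserve the exceptional divisor of any proper sub-composition of $\pi$, so the inductive hypothesis cannot be invoked in the natural way. The direct argument above circumvents this by handling the whole tower simultaneously; beyond the observation that $\widetilde{\Phi}$ must permute the connected components of $E$, the remaining verification reduces to exactly the same use of properness that was the heart of Lemma~1.
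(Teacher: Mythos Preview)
Your argument is correct and is precisely the ``obvious extension'' the paper has in mind: the paper gives no separate proof of this lemma, merely remarking that Lemma~1 extends to any sequence of blowings-up, and your use of properness of $\pi$ (hence closedness) to verify continuity at the finitely many image points is exactly the mechanism invoked in the one-line proof of Lemma~1. Your additional care in allowing $\widetilde{\Phi}$ to permute the connected components of $E$---and hence $\Phi$ to permute the finite set $S=\pi(E)$---is the honest formulation in the generality stated, though in every application in the paper all centers are infinitely near to a single point $0\in\mathbb{C}^2$, so $E$ is connected, $S=\{0\}$, and the permutation is trivial.
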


\begin{lemma}
[Ayuso \cite{Ayuso}]\label{Ayuso}Let $\Gamma,\widetilde{\Gamma}$ be two
nonsingular branches transverse to both axes $Ox$ and $Oy.$ Then there exists
a homeomorphism $\Phi:\mathbb{C}^{2}\rightarrow\mathbb{C}^{2}$ of
$\mathbb{C}^{2}$ which is the identity outside any given ball with center at $0$, keeps axes $Ox$ and $Oy$ invariant, is biholomorphic in a neighbourhood of $0\in\mathbb{C}^2$, and $\Phi(\Gamma)=\widetilde{\Gamma}$.
\end{lemma}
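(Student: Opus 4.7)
The plan is to construct $\Phi$ in two stages: a local biholomorphism that achieves $\Phi(\Gamma)=\widetilde{\Gamma}$ near $0$ while respecting the axes, and then a global extension via a suitably chosen cut-off that preserves injectivity.

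Since $\Gamma$ and $\widetilde{\Gamma}$ are nonsingular and transverse to $Oy$, they are graphs $y=f(x)$ and $y=\tilde{f}(x)$ with $f(0)=\tilde{f}(0)=0$; transversality to $Ox$ gives $f'(0),\tilde{f}'(0)\neq 0$. Factoring out the common $x$, the quotient $h(x):=\tilde{f}(x)/f(x)$ extends to a holomorphic, nowhere-vanishing function on some disk $|x|<\rho$, with $h(0)\neq 0$. Then
$$\Phi_{0}(x,y):=(x,\,y\,h(x))$$
is a local biholomorphism at $0$ that fixes $Ox$ pointwise, preserves $Oy$ set-wise (since $h(0)\neq 0$), and sends $\Gamma$ to $\widetilde{\Gamma}$. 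To globalise $\Phi_{0}$, I would fix a holomorphic logarithm $g:=\log h$ on $|x|<\rho$ and, given a ball $B$ of radius $R$, pick $0<a<b<\min(R,\rho)$ together with a continuous monotone cut-off $\psi:[0,\infty)\to[0,1]$ equal to $1$ on $[0,a]$ and to $0$ on $[b,\infty)$. The candidate is
$$\Phi(x,y):=\bigl(x,\;y\exp(\psi(\|(x,y)\|)\,g(x))\bigr),$$
which by construction equals $\Phi_{0}$ on $\|(x,y)\|\leq a$ (hence is biholomorphic near $0$ and realises $\Phi(\Gamma)=\widetilde{\Gamma}$ as germs) and equals the identity on $\|(x,y)\|\geq b$, in particular outside $B$. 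Preservation of the axes is immediate: $Ox$ is fixed pointwise, and on $Oy$ the map reduces to $y\mapsto y\exp(\psi(|y|)\,g(0))$, which maps $Oy$ to itself.

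The only delicate point, and the main obstacle, is to verify that $\Phi$ is globally a homeomorphism. Because $\Phi$ has the skew-product form $(x,y)\mapsto(x,\phi_{x}(y))$, this reduces to showing that each
$$\phi_{x}\colon y\longmapsto y\exp(\psi(\sqrt{|x|^{2}+|y|^{2}})\,g(x))$$
is a homeomorphism of $\mathbb{C}$. Writing $y=re^{i\theta}$, the angular part is a mere rotation by $\mathrm{Im}\,g(x)\cdot\psi(\sqrt{|x|^{2}+r^{2}})$, so the question reduces to the monotonicity of the radial factor $r\mapsto r\,e^{\mathrm{Re}\,g(x)\cdot\psi(\sqrt{|x|^{2}+r^{2}})}$. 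Setting $M:=\sup_{|x|\leq b}|g(x)|$, a short differentiation shows this factor is strictly increasing as soon as $|\rho\,\psi'(\rho)|<1/M$ for all $\rho$, a condition easily arranged by choosing $\psi$ affine in $\log\rho$ on $[a,b]$ with $a<b\,e^{-M}$. Once this one-variable estimate is settled, $\phi_{x}$ is a homeomorphism of $\mathbb{C}$ depending continuously on $x$, so $\Phi$ is a continuous bijection equal to the identity off a compact set, hence a homeomorphism of $\mathbb{C}^{2}$.
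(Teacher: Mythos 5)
Your construction is correct, and its local core coincides with the paper's: the paper normalizes $\Gamma$ to the line $y=x$, takes the vertical field $X=(0,\log\frac{s(x)}{x}\cdot y)$, and the time-$1$ map of its flow is exactly your $\Phi_{0}(x,y)=(x,y\,h(x))$ with $h(x)=s(x)/x=e^{g(x)}$. Where you genuinely diverge is in the globalization. The paper cuts off the \emph{vector field} (gluing $X$ with the zero field outside a given ball) and takes the time-$1$ flow of the resulting compactly supported smooth field; injectivity is then automatic, because the flow of a complete vector field consists of diffeomorphisms. You instead cut off the \emph{map}, damping the exponent by $\psi(\|(x,y)\|)$, and must therefore verify by hand that each fibre map $\phi_{x}$ is bijective --- which you do correctly, reducing the twist map to monotonicity of the radial factor and arranging $|\rho\,\psi'(\rho)|<1/M$ by taking $\psi$ affine in $\log\rho$ with $\log(b/a)>M$. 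Your route trades the ODE input (existence, uniqueness, completeness of compactly supported fields) for an explicit one-variable estimate, and is arguably more elementary; the paper's route gets the global homeomorphism for free from flow theory. Both arguments are complete. Two minor points in yours: the piecewise-defined $\psi$ has corners at $\rho=a,b$, so the strict monotonicity of the radial factor should be read piecewise (or $\psi$ smoothed), which is harmless; and to conclude that the continuous bijection $\Phi$ is a homeomorphism you should either invoke invariance of domain / properness (it is the identity off a compact set, as you note) or simply observe that $\Phi^{-1}$ has the same skew-product form with $F_{x}^{-1}$, which depends continuously on $(x,R)$.
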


\begin{proof}
(Ayuso) We may assume that $\Gamma$ is the germ of the line $L:y=x$ and
$\widetilde{\Gamma}$ is the germ of the parametric curve $y=s(x)=ax+h.o.t.$
with $a\neq0.$ Take the vertical smooth vector field $X=(0,\log\frac{s(x)}%
{x}\cdot y)$ in a sufficiently small neighbourhood of $0\in\mathbb{C}^{2}$, so that
a branch of $\log\frac{s(x)}{x}$ exists, and extend it to a smooth
vertical vector field on the whole of $\mathbb{C}^{2}$ by gluing it with the zero vector field
outside any given ball with center at $0.$ The flow $(\phi_{t})_{t\in
\mathbb{R}}$ for $X$, consisting of diffeomorphisms, is defined for all
$t\in\mathbb{R}$ (because the support of $X$ is compact). The diffeomorphism
$\Phi:=\phi_{1}$ satisfies all required conditions. In fact, since $X$ is
vertical and $X=(0,0)$ on $Ox$, $\Phi$ keeps axes $Ox$ and $Oy$ invariant.
Moreover, for small $x\in\mathbb{C}$
\[
\Phi(x,x)=\phi_{1}(x,x)=c_{(x,x)}(1)=(x,s(x))
\]
where $c_{(x,y)}(t),$ $t\in\mathbb{R},$ is the unique integral curve for $X$
satisfying $c_{(x,y)}(0)=(x,y)$; precisely: $c_{(x,y)}(t)=(x,ye^{t\log s(x)/x})$
for sufficiently small $(x,y)$ and  $t$. Since $X$ is holomorphic in a neighbourhood of
$0$, $\Phi=\phi_{1}$ also is biholomorphic in a neighbourhood of $0.$
\end{proof}

Before we state the next lemma, we introduce a new notion. Let $L:y=ax,$ $a\in\mathbb{C}, $
be a line in $\mathbb{C}^{2}$ and $r>0.$ By a \textit{cone surrounding }$L$
\textit{with radius }$r$ we mean the set $C_{r}(L)$ consisting of all lines
$y=\left(  a+z\right)  x,$ $\left\vert z\right\vert <r$ without the origin. Clearly, 
$C_{r}(L)$ is an open set in $\mathbb{C}^{2}.$

\begin{lemma}
\label{l4}Let $L_{1},\ldots,L_{m}$ and $\widetilde{L}_{1},\ldots,\widetilde
{L}_{m},$ $m\geq1,$ be two systems of different lines in $\mathbb{C}^{2}$ passing
through $0\in\mathbb{C}^{2}.$ Then there exists a homeomorphism $\Phi
:\mathbb{C}^{2}\rightarrow\mathbb{C}^{2}$ of $\mathbb{C}^{2},$ such that:

\begin{enumerate}[1. ]
\item $\Phi$ is the identity outside arbitrary small ball with center at $0, $

\item $\Phi$ transforms the germs of $\widetilde{L}_{i}$ at $0$ onto the germs
of $L_{i}$ at $0,$ for $i=1,\ldots,m,$

\item $\Phi$ transforms biholomorphically some disjoint \textit{cones $\widetilde{C_i}$
surrounding }$\widetilde{L}_{i}$ onto \textit{cones surrounding} $L_{i}$ in a
small neighbourhood of $0$, and each of these biholomorphisms $\Phi|\widetilde{C_i}$ is the
restriction of a biholomorphism of a neighbourhood of $0.$ In the case $\widetilde{L}_i=L_i$ we may choose this biholomorphic restriction to be identity.
\end{enumerate}
\end{lemma}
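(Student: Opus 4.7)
The plan is to construct $\Phi$ by patching together $m$ homeomorphisms, each handling one pair $(\widetilde{L}_i,L_i)$, supported on pairwise disjoint regions of $\mathbb{C}^2 \setminus \{0\}$.

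First, since the space of directions $\mathbb{P}^1$ is topologically $S^2$, one can choose $m$ pairwise disjoint simple arcs $\alpha_i \subset \mathbb{P}^1$ each joining the direction $[\widetilde{L}_i]$ to $[L_i]$ (a point when $\widetilde{L}_i = L_i$); pick them successively, using that the sphere minus a finite union of arcs and marked points is still connected. Thicken to pairwise disjoint open tubular neighborhoods $N_i \subset \mathbb{P}^1$ and let $D_i \subset \mathbb{C}^2 \setminus \{0\}$ be the union of all lines through $0$ whose direction lies in $N_i$. The $D_i$'s are pairwise disjoint, and each contains $\widetilde{L}_i \setminus \{0\}$ and $L_i \setminus \{0\}$. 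Inside each $D_i$ fix a narrow cone $\widetilde{C}_i$ surrounding $\widetilde{L}_i$, and choose a linear biholomorphism $A_i$ of $\mathbb{C}^2$ with $A_i(\widetilde{L}_i) = L_i$ (the identity when $\widetilde{L}_i = L_i$).

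The main step is to construct, for each $i$, a homeomorphism $\Psi_i$ of $\mathbb{C}^2$ which is the identity outside $D_i$ and outside the prescribed ball, and which coincides with $A_i$ throughout the cone $\widetilde{C}_i$. This is done by adapting the vector-field argument of Lemma~\ref{Ayuso}: take a holomorphic vector field on $\mathbb{C}^2$ whose time-$1$ flow is $A_i$, and multiply it by smooth bump functions -- a radial one (giving the identity outside the ball, as in Ayuso) and an angular one supported in $D_i$ and identically $1$ on a tube containing $\widetilde{C}_i$ together with its $A_i^t$-orbit for $t\in[0,1]$. Along the flow lines starting in $\widetilde{C}_i$ both bumps equal $1$, so the truncated field's time-$1$ flow agrees on $\widetilde{C}_i$ with the original one, i.e.\ with $A_i$. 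Because the $D_i$'s are pairwise disjoint, the $\Psi_i$'s have pairwise disjoint non-identity supports, and $\Phi := \Psi_m \circ \cdots \circ \Psi_1$ is a well-defined homeomorphism of $\mathbb{C}^2$. On each $\widetilde{C}_i$ the $\Psi_j$ for $j \ne i$ act as the identity, hence $\Phi|_{\widetilde{C}_i} = A_i|_{\widetilde{C}_i}$ is the restriction of the biholomorphism $A_i$ of the whole of $\mathbb{C}^2$ (or the identity when $\widetilde{L}_i = L_i$). The three conditions of the lemma follow directly.

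The principal obstacle is the middle step: the ``trajectory tube'' swept by the integral curves of the field generating $A_i$, starting from $\widetilde{C}_i$, must fit inside $D_i$ so that the angular cut-off does not destroy the property $\Psi_i(\widetilde{L}_i) = L_i$. When $[\widetilde{L}_i]$ and $[L_i]$ are far apart in $\mathbb{P}^1$, the natural logarithmic-spiral trajectories of $A_i^t$ may wind through other $D_j$; this is circumvented by factoring $A_i = A_i^{(N)} \circ \cdots \circ A_i^{(1)}$ into small-angle factors and applying the above construction iteratively, so that each elementary step moves tangent directions only slightly and remains inside a narrow tube contained in $D_i$.
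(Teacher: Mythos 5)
Your construction is genuinely different from the paper's. The paper takes the single linear map $\Psi(x,y)=(x,\frac{a}{b}y)$, restricts it to each vertical line $\mathbb{C}_x=\{x\}\times\mathbb{C}$, where it is an affine map of a disk onto a disk, extends it fiberwise to a homeomorphism of $\mathbb{C}_x$ equal to the identity outside a larger disk, and glues these extensions continuously in $x$; no vector fields or flows appear. Your route---truncating a field with time-one map $A_i$ by radial and angular bump functions---is closer in spirit to the paper's Lemma \ref{Ayuso}, and the ``trajectory tube'' difficulty you flag, together with the factorization of $A_i$ into small-angle factors following the arc, is a fair (if laborious) way to handle that particular point.

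The genuine gap is at the very first step. You claim one can choose pairwise disjoint arcs $\alpha_i$ in $\mathbb{P}$ from $[\widetilde{L}_i]$ to $[L_i]$, hence pairwise disjoint supports $D_i$. This is impossible whenever a target line coincides with a \emph{different} source line, i.e.\ $L_i=\widetilde{L}_j$ with $j\neq i$ (and $L_i\neq\widetilde{L}_i$, $L_j\neq\widetilde{L}_j$): then $\alpha_i$ ends at the point where $\alpha_j$ begins, so $D_i\cap D_j\neq\emptyset$, and the disjoint-support composition argument collapses ($\Psi_j$ is no longer the identity on $A_i(\widetilde{C}_i)$, so $\Phi$ need not carry $\widetilde{L}_i$ to $L_i$). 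The hypotheses of the lemma allow such coincidences---only the lines within each system are assumed distinct, and the statement explicitly contemplates $\widetilde{L}_i=L_i$---and this is exactly the configuration in which the lemma is invoked in case (b) of the main theorem, where $\widetilde{L}_2=L_1$. For that particular configuration one can rescue your scheme by composing in a suitable order (move $\widetilde{L}_2$ to $L'$ first, then $\widetilde{L}_1$ to $L_1$), but for the general statement---e.g.\ a swap $L_1=\widetilde{L}_2$, $L_2=\widetilde{L}_1$---no ordering of two angularly localized moves works and a third ``parking'' move is needed. The paper's fiberwise extension sidesteps this entirely, since on each $\mathbb{C}_x$ it only requires the $m$ source disks to be pairwise disjoint and the $m$ target disks to be pairwise disjoint, not the sources to be disjoint from the targets. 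You should either add a scheduling/parking argument reducing the general case to configurations admitting disjoint arcs, or replace the disjoint-support patching by an extension that handles all $m$ pairs simultaneously.
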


\begin{proof}
For simplicity we first assume $m=1.$ We may arrange things so that $L:y=ax,$ $\widetilde
{L}:y=bx,$ $a,b\in\mathbb{C},$ $a\cdot b\neq0.$ The linear mapping
$\Psi:\mathbb{C}^{2}\rightarrow\mathbb{C}^{2},$ $\Psi(x,y):=(x,\frac{a}{b}y)$
is a biholomorphism of $\mathbb{C}^{2}$ which transforms $\widetilde{L}$ onto
$L $ and moreover maps any cone $C_{r}(\widetilde{L})$ onto the cone
$C_{r\left\vert a\right\vert /\left\vert b\right\vert }(L).$ We will define
$\Phi$ on each complex plane $\mathbb{C}_{x}:=\{x\}\times\mathbb{C\subset
C}^{2}$ separately. Note that the trace of $C_{r}(\widetilde{L})$ on
$\mathbb{C}_{x}$ is the disk $D(bx,r\left\vert x\right\vert )$ with center at
$bx$ and radius $r\left\vert x\right\vert$, and similarly the trace of
$C_{r\left\vert a\right\vert /\left\vert b\right\vert }(L)$ on $\mathbb{C}_{x}
$ is the disk $D(ax,r\frac{\left\vert a\right\vert }{\left\vert b\right\vert
}\left\vert x\right\vert )$. The restriction $\Psi|\mathbb{C}_{x}$ maps
the disk $D(bx,r\left\vert x\right\vert )$ onto the disk
$D(ax,r\frac{\left\vert a\right\vert }{\left\vert b\right\vert }\left\vert
x\right\vert ).$ Obviously, there exists an extension $\widetilde{\Psi}_{x}$ of
$\Psi|D(bx,r\left\vert x\right\vert )$ to a homeomorphism of the whole
$\mathbb{C}_{x}$ which is identity outside an open ball $D_x$
properly containing both $D(bx,r\left\vert x\right\vert )$ and $D(ax,r\frac{\left\vert
a\right\vert }{\left\vert b\right\vert }\left\vert x\right\vert )$ (i.e.
$\overline{D(bx,r\left\vert x\right\vert )},$ $\overline{D(ax,r\frac
{\left\vert a\right\vert }{\left\vert b\right\vert }\left\vert x\right\vert)
}\subset D$); see Figure \ref{rys:6}.
 
\bigskip

\begin{figure}[ht]
	\centering
	\includegraphics[width=0.8\textwidth]{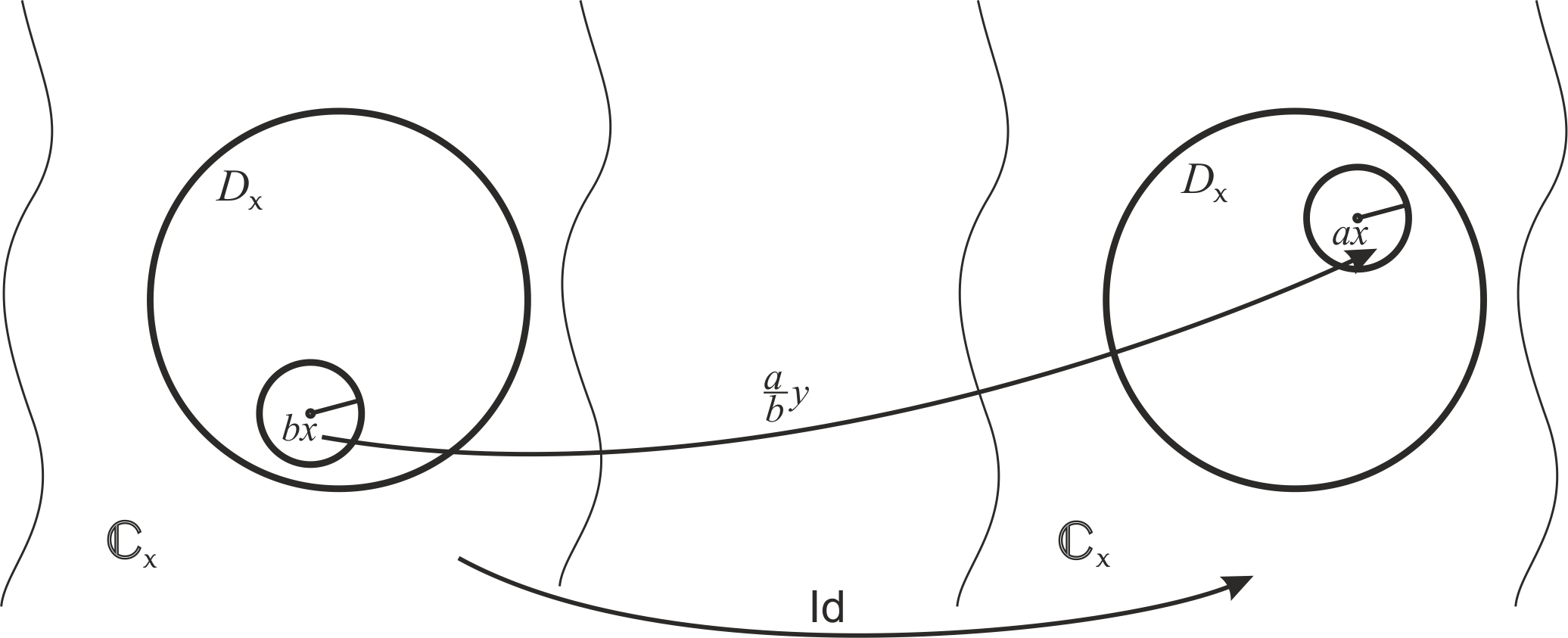}\\

	\caption{A schematic representation of the homeomorphism $\widetilde{\Psi}_x$ on $\mathbb{C}_x$ for small $|x|$.}\label{rys:6}
\end{figure}

Of course, we may choose $D_x$ and $\widetilde{\Psi}_{x}$ so that they also depend continuously on $x.$ Then we
define $\Phi:\mathbb{C}^{2}\rightarrow\mathbb{C}^{2}$ as follows:
\begin{enumerate}[1. ]
\item for small $\left\vert x\right\vert $ we put $\Phi|\mathbb{C}_{x}%
:=\widetilde{\Psi}_{x},$

\item for big $\left\vert x\right\vert $ we put $\Phi|\mathbb{C}_{x}%
:=\operatorname*{Id}|\mathbb{C}_{x},$

\item for intermediate $\left\vert x\right\vert $ we continuously join
$\widetilde{\Psi}_{x}$ to $\operatorname*{Id}.$
\end{enumerate}
The mapping $\Phi$ satisfies all conditions in the assertion of the lemma.

The case $m\geq2$ is similar. We should only choose radii $r$ so that the cones
surrounding $\widetilde{L}_{i}$ and their images under $(x,y)\mapsto
(x,\frac{a_{i}}{b_{i}}y)$ be disjoint.
\end{proof}

Now we may pass to the proof of the main theorem.
\begin{proof}
[Proof of the main theorem]Let $\Gamma,\widetilde{\Gamma}$ be two equisingular
plane curve singularities. Hence their Enriques diagrams $E(\Gamma)$ and
$E(\widetilde{\Gamma})$ are isomorphic. In particular, $\Gamma$ and
$\widetilde{\Gamma}$ have the same number of branches, say $k.$ After
renumbering them we may assume that $\gamma_{1},\ldots,\gamma_{k}$ and
$\widetilde{\gamma}_{1},\ldots,\widetilde{\gamma}_{k}$ are branches of
$\Gamma$ and $\widetilde{\Gamma},$ respectively, and $E(\gamma_{i})\cong
E(\widetilde{\gamma}_{i}),$ $\operatorname{i}(\gamma_{i},\gamma_{j})=\operatorname{i}(\widetilde{\gamma}%
_{i},\widetilde{\gamma}_{j}),$ $i,j=1,\ldots,k,$ $i\neq j.$

The vertices of $E(\Gamma)$ represent points infinitely near to $0\in
\mathbb{C}^{2}$ in the process of desingularization $\pi:B\rightarrow
(\mathbb{C}^{2},0)$ of $\Gamma.$ They are centres of consecutive blowing-ups.
If we apply this process of desingularization $\pi$ to $\widetilde{\Gamma}$,
then some of these points will occur also in $E(\widetilde{\Gamma}).$ For
instance, $0\in\mathbb{C}^{2}$ is a common point of $E(\Gamma)$ and
$E(\widetilde{\Gamma})$ -- it represents the root of $E(\Gamma)$ and
$E(\widetilde{\Gamma}).$ We will prove that $\Gamma$ and $\widetilde{\Gamma}$
are topologically equivalent by induction on the sum $n$ of
numbers of non-common points in $\overline{E}(\gamma_i)$ and $\overline{E}(\widetilde{\gamma}_i)$ for $i=1,\ldots,k$, where $\overline{E}(\gamma_i)$ (respectively  $\overline{E}(\widetilde{\gamma}_i)$) means the subdiagram in $E(\Gamma)$ (resp. $E(\widetilde{\Gamma})$) representing points belonging to $\gamma_i$ (resp. $\widetilde{\gamma_i}$). Notice $E(\gamma_i)$ and $\overline{E}(\gamma_i)$ may differ (see Example 4), but only in points of multiplicity one. 

1.  $n=0.$ This means that the process of desingularization of $\Gamma$ is exactly the
same as of $\widetilde{\Gamma}.$ The centres of consecutive blowing-ups are
exactly the same.

Consider one of the maximal points $P$ of desingularization --
it represents a leaf in $E(\Gamma)$ and simultaneously in $E(\widetilde
{\Gamma}).$ It belongs to only one of branches $\gamma_{1},\ldots
,\gamma_{k}$ and only one of $\widetilde{\gamma}_{1},\ldots,\widetilde{\gamma
}_{k}.$ Since these branches are equisingular, we may assume $P\in\gamma_{1}$
and $P\in\widetilde{\gamma}_{1}$ (see Figure \ref{rys:7}).

\begin{figure}[ht] 
	\centering
	\includegraphics[width=0.28\textwidth]{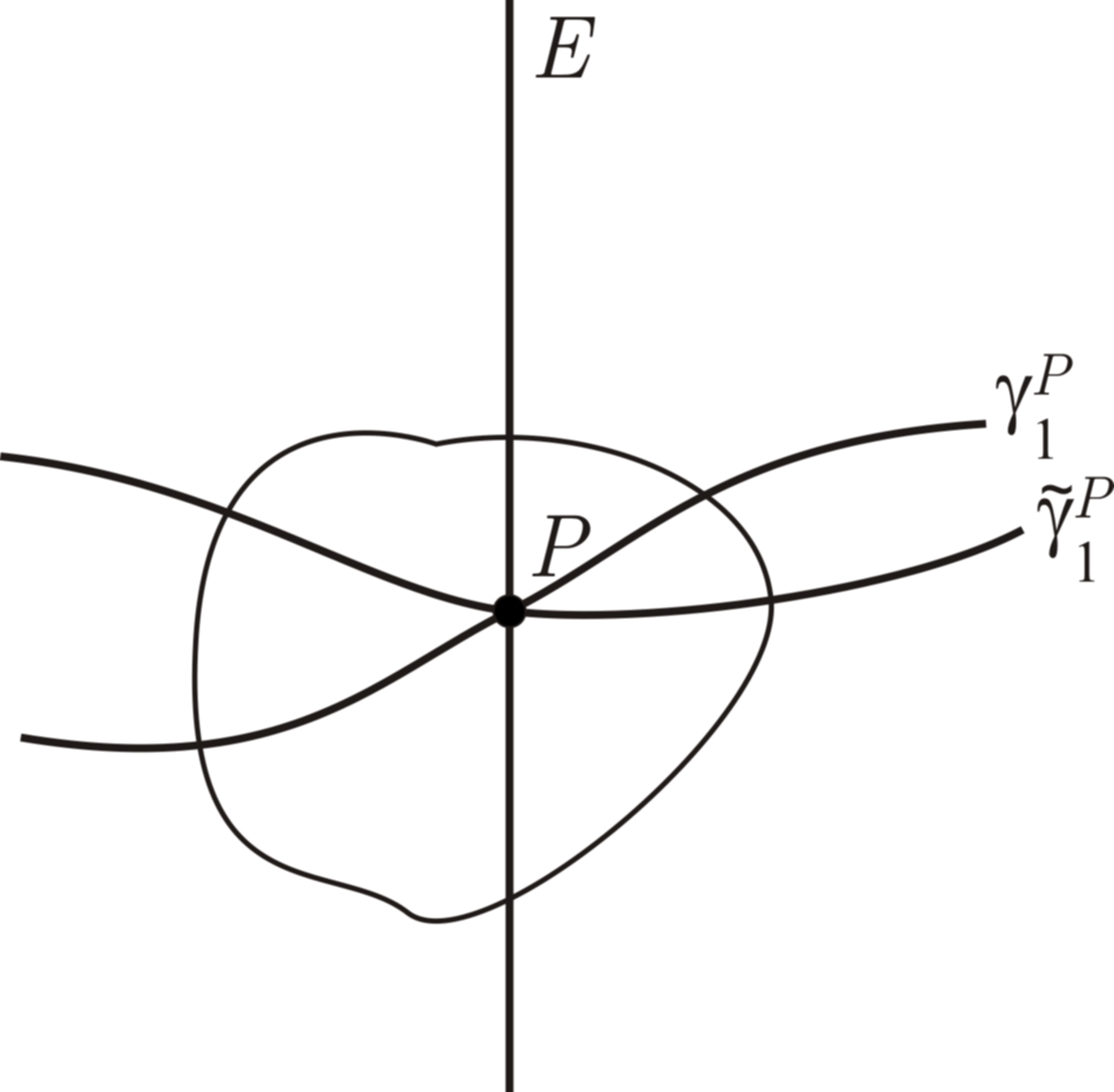}\\
	${}_{}$\\
	
	\caption{The case of nonsingular branches transversal to $E$.}\label{rys:7}
\end{figure}

If we denote the proper preimages of $\gamma_{1}$ and $\widetilde{\gamma}_{1}$
passing through $P$ by $\gamma_{1}^{P}$ and $\widetilde{\gamma}_{1}^{P}$,
 then they are non-singular and transversal to $E$. By Lemma 3, there exists a homeomorphism of $B$ which transforms $\widetilde
{\gamma}_{1}^{P}$ onto $\gamma_{1}^{P}$ in arbitrarily small neighbourhood of $P$,
keeps the exceptional divisor invariant, and is the identity outside another
neighbourhood of $P.$ Doing the same for all maximal points of
desingularization we see that all these homeomorphisms glue to a homeomorphism
of $B$ which transforms the proper preimage of $\widetilde{\Gamma}$ by $\pi$
onto the proper preimage of $\Gamma$ by $\pi$ while keeping the exceptional divisor invariant. By Lemma 2, its projection gives a homeomorphism of $(\mathbb{C}^{2},0)$ which transforms $\widetilde{\Gamma}$ on $\Gamma.$

2.  Assume the theorem holds for any pair of equisingular singularities for
which the number of non-common points in all branches in the
desingularization process of $\Gamma$ is equal to $(n-1),$ $n\geq 1$.

Take now
singularities $\Gamma,\widetilde{\Gamma}$ for which this sum is equal to $n.$
Since $n\geq1$, there exist equisingular branches, say $\gamma_{1}$ and $\widetilde{\gamma
}_{1},$ of $\Gamma$ and $\widetilde{\Gamma}$ such that in $\overline{E}(\gamma_{1})$
there exist points which do not belong to $\overline{E}(\widetilde{\gamma}_{1}).$ Take
the last common point $P$ in $\overline{E}(\gamma_{1})$ and $\overline{E}(\widetilde{\gamma}_{1}).$
In this point the proper preimages $\gamma_{1}^{P}$ and $\widetilde{\gamma}%
_{1}^{P}$ of both branches $\gamma_{1}$ and $\widetilde{\gamma}_{1}$ have different tangent lines $L_1$ and $\widetilde{L}_1$. Moreover,  $\gamma_{1}^{P}$ and $\widetilde{\gamma}_{1}^{P}$ are not tangent to any component of the exceptional divisor (as $\gamma_{1}$ and $\widetilde{\gamma}_{1}$ are equisingular i.e. $E(\gamma_{1})\cong E(\widetilde{\gamma}_{1})$ and in consequence $\overline{E}(\gamma_{1})\cong \overline{E}(\widetilde{\gamma}_{1})$); see Figure \ref{rys:8}.

\begin{figure}[ht]
	\centering
	\includegraphics[width=0.45\textwidth]{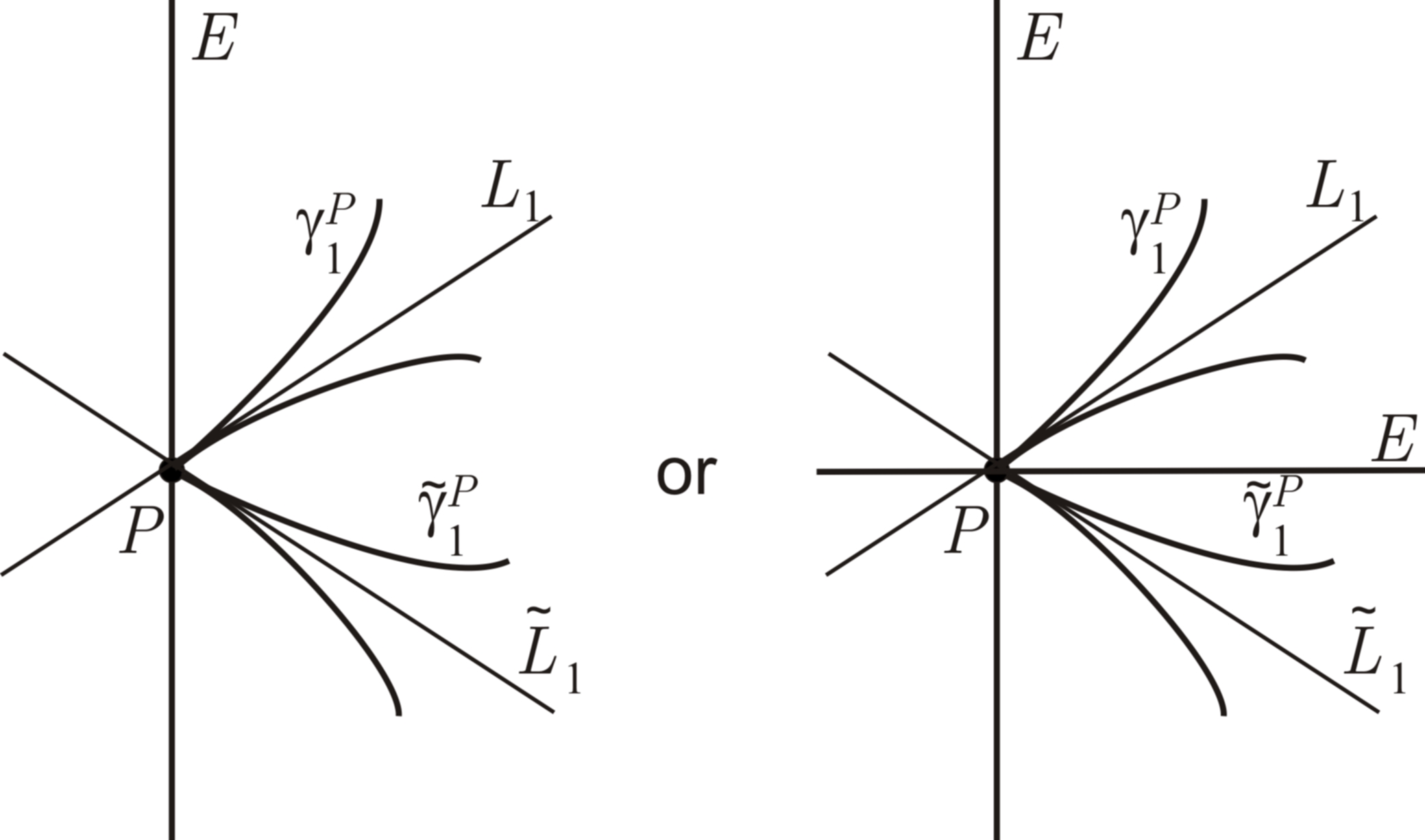}\\
	${}_{}$\\

	\caption{The general case of branches with different tangent lines and not tangent to components of the exceptional divisor.}\label{rys:8}
\end{figure}

It may happen there exist branches of $\Gamma$ whose proper preimages at $P$
have the same tangent line $L_{1}$ as $\gamma_{1}^{P}.$ Assume these are
$\gamma_{2}^{P},\ldots,\gamma_{r}^{P}$, $r\leq k.$ Then, of course,
$\widetilde{\gamma}_{2}^{P},\ldots,\widetilde{\gamma}_{r}^{P}$ share the same
tangent line $\widetilde{L}_{1}$ as $\widetilde{\gamma}_{1}^{P}.$
Moreover,
there may exist other branches of $\Gamma$ whose proper preimages also pass
through $P.$ Assume these are $\gamma_{r+1}^{P},\ldots,\gamma_{s}^{P}$, $s\leq
k$. Their tangent lines are different from $L_1$. Denote all their different tangent lines by $L_2,\ldots,L_m$. Then, by equisingularity of $\gamma_{i}$ to
$\widetilde{\gamma}_{i}$, the branches $\widetilde{\gamma}_{r+1}^{P}%
,\ldots,\widetilde{\gamma}_{s}^{P}$ also pass through $P$ and have also $(m-1)$  different  tangent lines, say $\widetilde{L}_{2}%
,\ldots,\widetilde{L}_{m}$.

Now we apply Lemma \ref{l4} to the manifold $B$
containing $P$ to get a new singularity $\widetilde{\Gamma}^{\prime}$
which will be equisingular and topologically equivalent to $\widetilde{\Gamma}$
and which will have less non-common points with $\Gamma$ in desingularization
process of $\Gamma$ than $\widetilde{\Gamma}$ does. We consider two cases:

(a) among $\widetilde{L}_{2},\ldots,\widetilde{L}_{m}$ there is no $L_{1}$. 
Then in Lemma \ref{l4} we take the systems of lines $\widetilde{L}%
_{1},\ldots,\widetilde{L}_{m}$ and $L_{1},\widetilde{L}_{2},\ldots
,\widetilde{L}_{m}.$ We obtain a homeomorphism $\Phi$ of $B$ which maps
$\widetilde{L}_{1}$ together with branches $\widetilde{\gamma}_{2}^{P}%
,\ldots,\widetilde{\gamma}_{r}^{P}$ tangent to it, respectively, onto $L_{1}$ and some new
branches $\Phi(\widetilde{\gamma}_{1}^{P}),\ldots,\Phi(\widetilde{\gamma}%
_{r}^{P})$ tangent to $L_{1}$, and which leaves the remaining branches
$\widetilde{\gamma}_{r+1}^{P},\ldots,\widetilde{\gamma}_{s}^{P}$ passing
through $P$ unchanged. Moreover, we may assume that $\Phi$ leaves the
exceptional divisor unchanged (in appropriate local coordinates at $P$ the
exceptional divisor may be represented as additional lines in the above systems
of lines). The projections of $\Phi(\widetilde{\gamma}_{1}^{P}),\ldots
,\Phi(\widetilde{\gamma}_{r}^{P})$ to $(\mathbb{C}^{2},0)$ are new branches at
$0\in\mathbb{C}^{2}.$ Denote them by $\widetilde{\gamma}_{1}^{\prime}%
,\ldots,\widetilde{\gamma}_{r}^{\prime}.$ These branches together with
$\widetilde{\gamma}_{r+1}^{\prime}:=\widetilde{\gamma}_{r+1},\ldots
,\widetilde{\gamma}_{k}^{\prime}:=\widetilde{\gamma}_{k}$ define a new plane
curve singularity $\widetilde{\Gamma}^{\prime}.$ We claim $\widetilde{\Gamma
}^{\prime}$ is equisingular and topologically equivalent to $\widetilde{\Gamma
}.$ In fact, regarding equisingularity, we notice $\overline{E}(\widetilde{\gamma}%
_{1}^{\prime}),\ldots,\overline{E}(\widetilde{\gamma}_{r}^{\prime})$ are isomorphic to
$\overline{E}(\widetilde{\gamma}_{1}),\ldots,\overline{E}(\widetilde{\gamma}_{r})$ because
desingularization process up to $P$ is the same for $\widetilde{\gamma}%
_{i}^{\prime}$ and $\widetilde{\gamma}_{i},$ $i=1,\ldots,r$, and at $P$ the
branches $\left(  \widetilde{\gamma}_{1}^{\prime}\right)  ^{P}=\Phi
(\widetilde{\gamma}_{1}^{P}),\ldots,\left(  \widetilde{\gamma}_{r}^{\prime
}\right)  ^{P}=\Phi(\widetilde{\gamma}_{r}^{P})$ and $\widetilde{\gamma}%
_{1}^{P},\ldots,\widetilde{\gamma}_{r}^{P}$ are biholomorphic and not tangent to any components of the exceptional divisior passing through $P$. Hence obviously $E(\widetilde{\gamma}_1^{\prime}),\ldots,E(\widetilde{\gamma}_r^{\prime})$ are isomorphic to $E(\widetilde{\gamma}_1),\ldots,E(\widetilde{\gamma}_r)$. Since
$\widetilde{\gamma}_{r+1}^{\prime}=\widetilde{\gamma}_{r+1},\ldots
,\widetilde{\gamma}_{k}^{\prime}=\widetilde{\gamma}_{k}$, obviously
$E(\widetilde{\gamma}_{r+1}^{\prime})=E(\widetilde{\gamma}_{r+1}%
),\ldots,E(\widetilde{\gamma}_{k}^{\prime})=E(\widetilde{\gamma}_{k}).$
Moreover, the equalities $\operatorname{i}(\widetilde
{\gamma}_{i}^{\prime},\widetilde{\gamma}_{j}^{\prime})=\operatorname{i}(\widetilde{\gamma
}_{i},\widetilde{\gamma}_{j})$ $i,j=1,\ldots,k,$ $i\neq j,$ hold for the same
reasons and because of the Noether's formula. Topological equivalence of $\widetilde{\Gamma
}^{\prime}$ and $\widetilde{\Gamma}$ is obvious as $\Phi$ is a homeomorphism
of $B$ which leaves the exceptional divisor unchanged.

(b) among $\widetilde{L}_{2},\ldots,\widetilde{L}_{m}$ there is $L_{1},$ say
$\widetilde{L}_{2}=L_{1}.$ Then in Lemma \ref{l4} we take the systems of lines
$\widetilde{L}_{1},\widetilde{L}_{2},\widetilde{L}_{3}\ldots,\widetilde{L}%
_{m}$ and $L_{1},L^{\prime},\widetilde{L}_{3},\ldots,\widetilde{L}_{m},$ where
$L^{\prime}$ is a new line different from $L_{1},\widetilde{L}_{3}%
,\ldots,\widetilde{L}_{m}.$ The same reasoning as in item (a) also gives a new
singularity $\widetilde{\Gamma}^{\prime}$ which is equisingular and
topologically equivalent to $\widetilde{\Gamma}$ and which has less non-common
points with $\Gamma$ in desingularization process of $\Gamma$ than
$\widetilde{\Gamma}$ does.

In each case we get $\widetilde{\Gamma}^{\prime}$ which is equisingular to
$\widetilde{\Gamma}$, and hence to $\Gamma$, and which has less non-common
points with $\Gamma$ in desingularization process of $\Gamma$ than
$\widetilde{\Gamma}$ does. By induction hypothesis, $\Gamma$ is topologically
equivalent to $\widetilde{\Gamma}^{\prime}$ and hence to $\widetilde{\Gamma}.$
This ends the proof.
\end{proof}

\begin{problem}
	As we know the topological equivalence of plane curve singularities is the same as their bilipschitz equivalence \cite{Neumann}, we pose the problem to find, using the Ayuso's method, a bilipschitz homeomorphism.
\end{problem}

\end{document}